\newtheorem{theorem}{Theorem}[section]
\newcommand{\N}{\mathbb{N}}
\newcommand{\R}{\mathbb{R}}
\newcommand{\grad}{\nabla}
\newcommand{\ov}{\overline}
\begin{document}
%\lhead{}
%\rhead{}

\begin{flushleft}
\Large 
\noindent{\bf \Large Dirichlet spectral-Galerkin approximation method for the simply supported vibrating plate eigenvalues}
\end{flushleft}

\vspace{0.2in}

{\bf  \large Isaac Harris}\\
\indent {\small Department of Mathematics, Purdue University, West Lafayette, IN 47907 }\\
\indent {\small Email: \texttt{harri814@purdue.edu}}

\vspace{0.2in}

%%%%%%%%%%%%%%%%%%%%%%%%%%%%%%%%%%%%%%%%
\begin{abstract}
\noindent In this paper, we analyze and implement  the Dirichlet spectral-Galerkin method for approximating simply supported vibrating plate eigenvalues with variable coefficients. This is a Galerkin approximation that uses the approximation space that is the span of finitely many Dirichlet eigenfunctions for the Laplacian. Convergence and error analysis for this method is presented for two and three dimensions. Here we will assume that the domain has either a smooth or Lipschitz boundary with no reentrant corners. An important component of the error analysis is Weyl's law for the Dirichlet eigenvalues. Numerical examples for computing the simply supported vibrating plate eigenvalues for the unit disk and square are presented. In order to test the accuracy of the approximation, we compare the spectral-Galerkin method to the separation of variables for the unit disk. Whereas for the unit square we will numerically test the convergence rate for a variable coefficient problem.
\end{abstract}

\noindent {\bf Keywords}:  Simply Supported Plate $\cdot$ Spectral-Galerkin Method $\cdot$ Error Estimates \\

\noindent {\bf MSC}:  35J30 $\cdot$ 65N25  $\cdot$ 65N35

%%%%%%%%%%%%%%%%%%%%%%%%%%%%%%%%%%%%%%%%%%%%%%%%%%%%%%%%%%%
\section{Introduction}
We are interested in the numerical approximation of the simply supported vibrating plate eigenvalues. In order to compute the eigenvalues, we will employ a Galerkin approximation method. The eigenvalue problem we consider here corresponds to a fourth-order partial differential equation. Fourth-order problems have many applications in physics and engineering. A fourth-order problem was used to model the scattering by an impenetrable obstacle in an infinite elastic plate in \cite{lsm-elastic}. Here the corresponding elastic model can be reduced to a problem with the bilaplacian which is used to analyze the direct and inverse scattering problem of recovering the obstacle. The Linear Sampling Method was analyzed and implemented to recover the obstacle. In the case of acoustic scattering, the transmission eigenvalue problem is a fourth-order eigenvalue problem that has been studied by many researchers. It is well known (see for e.g. \cite{TE-book,cchlsm,armin}) that these eigenvalues can be determined from the scattering data. This leads one to study the inverse spectral problem of determining the refractive index from the eigenvalues. By using the monotonicity of the transmission eigenvalues with respect to the refractive index has been used to estimate it's average value from the eigenvalues (see for e.g. \cite{spectraltev2,te-homog}). This eigenvalue problem is associated with the scattering by an incident plane wave by an inhomogeneous isotropic media. See the manuscript \cite{TE-book} for a detailed discussion of the theory and application of the transmission eigenvalues. We also note that in \cite{zi-te} the clamped plate eigenvalues are connected to the scattering of an inhomogeneous medium.

In order to approximate fourth-order eigenvalue problems such as the transmission eigenvalues, one can use conforming finite elements such as the Argyris elements used in \cite{fem-te}. Since the Argyris elements form a globally continuously differentiable finite element space this makes them difficult to implement. Another finite element method that has been employed is the continuous interior penalty method. This uses non-conforming finite elements based on standard piece-wise continuous elements typically used for second-order partial differential equations. This method was implemented in \cite{fem-te2} for the transmission eigenvalue problem and \cite{fem-4th1} for classical biharmonic eigenvalue problems (we also refer to \cite{fem-4thandTE}). In \cite{spectraltev1,spectraltev2,spectraltev3} spectral-Galerkin methods are used to compute the transmission eigenvalues whereas in \cite{spec-stek} this method is studied for the biharmonic Steklov eigenvalue problem. Recently, in \cite{spec-4th} a new spectral element method for fourth-order problems was developed. This forms global basis functions that are usually associated with a differential operator that has sufficient approximation properties in the solution space. Since the basis functions are usually known analytically this makes these methods simple to implement. We refer to the manuscript \cite{eig-FEM-book} for the analysis of finite element methods applied to eigenvalue problems. We also note that recently the so-called method of fundamental solutions which can be seen as a boundary integral equation method has been used to compute the transmission eigenvalues in \cite{mfs-te} for a constant refractive index(see also \cite{kleefeldITP}). This method may also be able to be used to biharmonic eigenvalue problems with constant coefficients when the fourth-order differential operator can be factorized with second-order elliptic operators.

Here we are motivated by the previous numerical methods to study the so-called `Dirichlet spectral-Galerkin' method for the simply supported vibrating plate eigenvalues. This method uses the Dirichlet eigenfunctions for the Laplacian as the basis for the computational space. It is know that these eigenfunctions form an orthonormal basis in $L^2(D)$(Chapter 9 of \cite{team-pancho}) and in \cite{zite} they have been shown to have approximation properties in $H^2(D) \cap H^1_0(D)$.  This approximation method is used to compute the zero-index transmission eigenvalues with a conductive (i.e. Robin) boundary condition.  We now define the simply supported vibrating plate eigenvalues as the values $\tau \in \R_{+}$ such that there exists a nontrivial solution $u \in H^2(D)$  satisfying 
\begin{align}\label{eig1}
\Delta \alpha(x) \Delta u = \tau \beta(x) u \,\,\, \textrm{ in } \,\,\, D \quad \textrm{ with } \quad u=\Delta u=0 \,\,\, \textrm{ on } \,\,\, \partial D. 
\end{align}
For our analysis, we will assume that the domain $D \subset \R^d$ (for $d=2,3$) is a bounded simply connected open set where the boundary $\partial D$ is either polygonal with no reentrant corners or class $\mathscr{C}^2$. Throughout the paper we will also assume that $\alpha$ and $\beta$  are in $L^{\infty} (D)$ such that there exists positive constants
$$ \alpha_{\text{min}}  \leq \alpha(x) \leq \alpha_{\text{max}}  \quad \text{ and } \quad \beta_{\text{min}} \leq \beta(x)\leq \beta_{\text{max}}\quad \text{ a.e.} \, \, \,\, x \in  D.$$
 The restriction on the boundary is needed so that we may appeal to global  elliptic regularity results. We note that in recent years this problem has been studied in \cite{fem-4th1} where the continuous interior penalty method is used and in \cite{fem-4th3} where an adaptive Ciarlet-Raviart mixed finite element method was used to compute the eigenvalues. The main contribution of this paper is the study error analysis of the Dirichlet spectral-Galerkin method for this fourth-order eigenvalues problem. As seen in \cite{zite} this approximation method gives good results for a modest amount of basis functions. The idea is that by beginning with this simpler eigenvalue problem we can then apply this method to more complex eigensystems such as the transmission eigenvalue problem. 

We now summaries the preceding sections of this paper. In Section \ref{problem-statement},  we will rigorously define the variational formulation of the simply supported vibrating plate eigenvalue problem \eqref{eig1}. The analysis of the corresponding source problem is also considered in this section, which will be used in the error analysis in the following sections. Next, in Section \ref{conv-analysis} we study the convergence and error analysis for the proposed Dirichlet spectral-Galerkin method. This will require further study of the approximation properties of the Dirichlet eigenfunctions. Then, in Section \ref{numerics} we will present numerical examples of the approximation method by computing the eigenvalues in the unit disk and square. Lastly, we provide a summary of the material presented in Section \ref{last}. 

%%%%%%%%%%%%%%%%%%%%%%%%%%%%%%%%%%%%%%%%%%%%%%%%%%%%%%%%%%%
\section{The Simply Supported Plate Eigenvalues}\label{problem-statement}

In this section, we will study the variational formulation of the simply supported vibrating plate eigenvalue problem \eqref{eig1}. The analysis in this section will focus on the corresponding source problem. This amounts to the study of the solution operator for the auxiliary source problem. In order to obtain the variational formulation of \eqref{eig1}, we will appeal to Green's second Theorem. Therefore, we see that the essential boundary condition $u=0$ on $\partial D$ must be built into the solution space.  Whereas the natural boundary condition $\Delta u=0$ on $\partial D$ is used in the application of Green's second Theorem. To this end, we let $H^2(D)$ and $H^1_0(D)$ denote the standard Sobolev spaces  
%$$H^2(D) =\big\{ \varphi \in L^2(D) \, : \, \partial_{x_i} \varphi \quad  \text{and} \quad \partial_{x_i x_j} \varphi \in L^2(D) \, \text{ for } \, i,j =1, \cdots, d \big\} $$
%and 
%$$H^1_0(D) =\big\{ \varphi \in L^2(D) \, : \, \partial_{x_i} \varphi  \in L^2(D) \,\,  \text{ for } \,\,  i =1, \cdots , d \,\, \text{ with } \,\, \varphi|_{\partial D}=0 \big\}$$
equipped with their standard norms. This implies that the variational formulation of \eqref{eig1} is given by finding the values $\tau \in \R_{+}$ such that there exists a nontrivial solution $u \in H^2(D) \cap H^1_0(D)$  satisfying 
$$\int\limits_{D} {\alpha}(x) \Delta u \, \Delta \ov{v}\, \text{d}x =\tau \int\limits_{D} {\beta}(x) u \ov{v} \, \text{d}x \quad \text{ for all } \,\,\, v \in H^2(D) \cap H^1_0(D).$$
It is clear that, by appealing to the well-posedness estimate for the Poisson problem and the global $H^2$ elliptic regularity estimate we can conclude that for the Hilbert space $H^2(D) \cap H^1_0(D)$ the functional $\|\Delta \cdot \|_{L^2(D)}$ is equivalent to the standard  norm in $H^2(D)$. Therefore, we define the variational space  
$$X(D)=H^2(D) \cap H^1_0(D) \quad \text{ such that } \quad \| \cdot \|_{X(D)} = \|\Delta \cdot \|_{L^2(D)}$$
which defines a Hilbert space with the associated inner-product. We now define bounded sesquilinear forms on $X(D)$ are given by 
\begin{align}
a(u, v)= \int\limits_{D} {\alpha}(x) \Delta u \, \Delta \ov{v}\, \text{d}x \quad \text{ and } \quad b( u, v)=\int\limits_{D} \beta(x) u \ov{v} \, \text{d}x  \label{forms}
\end{align} 
for all $u$ and $v$ in $X(D)$. By the definition of the sesquilinear forms we have that the equivalent variational form of the eigenvalue problem \eqref{eig1} can be written as: find the values $\tau \in \R_{+}$ such that there exists a nontrivial solution $u \in X(D)$  satisfying 
\begin{align}
a(u,v)= \tau  b(u,v)  \quad \text{ for all } \,\,\, v \in X(D). \label{eig-varform} 
\end{align} 

Notice, that due to the assumptions on $\alpha$ we have that $a(\cdot \, , \cdot)$ is a coercive and Hermitian sesquilinear form on $X(D)$. We also have that by the assumptions on $\beta$ that $b(\cdot \, , \cdot)$ is a Hermitian sesquilinear form on $X(D)$. Now, we consider the source problem corresponding to \eqref{eig-varform} and by the Lax-Milgram Lemma we can define the solution operator 
\begin{align}
T: {X}(D) \to X(D) \,\,\, \text{ such that } \,\,\,  a\big(Tf,v \big)=b(f,v)  \quad \text{ for all } \,\,\, f, v \in X(D). \label{def-T}
\end{align} 
Notice, that since $a(\cdot \, , \cdot)$ it is an equivalent inner-product on $X(D)$. We have that since $\alpha$ and $\beta$ are real-valued 
$$a\big(Tf,v\big)=b(f,v)=\ov{b(v,f)}= \ov{a\big(Tv,f\big)}=a\big(f,Tv\big)  \quad \text{ for all } \,\,\, f, v \in X(D).$$
This implies that the solution operator $T$ is $a(\cdot \, , \cdot)$ self-adjoint.
It is clear by Green's second Theorem that $Tf $ satisfies the equation 
$$\Delta \alpha(x) \Delta Tf =  \beta(x) f \,\,\, \textrm{ in } \,\,\, D \quad \textrm{ with } \quad Tf=\Delta Tf =0 \,\,\, \textrm{ on } \,\,\, \partial D$$
and by the well-posedness satisfies  the estimate 
$$ \| Tf \|_{{X}(D)} \leq C \| f \|_{L^2(D)} \quad \text{ for all } \,\, f \in X(D)$$
which implies that $T$ is compact. Therefore, the Hilbert-Schmidt Theorem gives that there is an infinite sequence of eigenvalues $\mu_j \in \R_{+}$ and eigenfunctions $u_j \in X(D)$ for the operator $T$. This gives that there are infinitely many eigenvalues $\tau_j = \mu_j^{-1} \in  \R_{+}$ satisfying \eqref{eig-varform}. We will assume that the eigenfunctions $u_j$ are normalized with respect to the $L^2(D)$ norm.  
We now prove a regularity estimate of the bilapacian for the operator $T$ provided that $\alpha$ has bounded weak derivatives up to second-order i.e. is in the Sobolev space 
$$W^{2,\infty}(D) = \big\{ \varphi \in L^\infty(D) \, : \, \partial_{x_i} \varphi \quad  \text{and} \quad \partial_{x_i x_j} \varphi \in L^\infty(D) \, \text{ for } \, i,j =1, \cdots, d \big\} .$$

\begin{theorem}\label{regularity-T}
Let the operator $T: X(D) \to X(D)$ be as defined by \eqref{def-T}. Assume that $\alpha \in W^{2,\infty}(D)$ then $\Delta^2 Tf \in L^2(D)$ for all $f \in X(D)$ satisfying
$$\| \Delta^2 Tf \|_{L^2(D)} \leq C \| f \|_{L^2(D)}.$$
\end{theorem}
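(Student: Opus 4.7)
The plan is to reduce the theorem to showing that the auxiliary function $\alpha\Delta Tf$ itself lies in $X(D) = H^2(D)\cap H^1_0(D)$. Once that is in hand, $\Delta Tf = (\alpha\Delta Tf)/\alpha$ will inherit $H^2$-regularity because, under $\alpha \in W^{2,\infty}(D)$ with $\alpha \geq \alpha_{\text{min}} > 0$, the reciprocal $1/\alpha$ also lies in $W^{2,\infty}(D)$, and such functions multiply $H^2(D)$ into itself via the Leibniz rule; taking one further Laplacian will then produce $\Delta^2 Tf \in L^2(D)$ with the advertised bound.

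First I would introduce an auxiliary $z \in X(D)$ as the weak solution of the Poisson problem $-\Delta z = \beta f$ in $D$ with $z = 0$ on $\partial D$. The global $H^2$ elliptic regularity available on our class of domains yields $z \in H^2(D) \cap H^1_0(D)$ together with $\|z\|_{H^2(D)} \leq C\|\beta f\|_{L^2(D)} \leq C\|f\|_{L^2(D)}$.

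The key step is to rewrite the right-hand side of the defining identity $a(Tf,\phi) = b(f,\phi)$ in terms of $z$. For every $\phi \in X(D)$, Green's second identity applied to $z$ and $\phi$ produces no boundary contribution (both traces vanish), which gives
$$\int_D \beta f\, \ov{\phi}\, dx \;=\; -\int_D \ov{\phi}\, \Delta z \, dx \;=\; -\int_D z\, \Delta \ov{\phi} \, dx.$$
Substituting this into the weak formulation then produces
$$\int_D \bigl(\alpha \Delta Tf + z\bigr)\, \Delta \ov{\phi} \, dx \;=\; 0 \quad \text{for all } \phi \in X(D).$$
Since $\Delta : X(D) \to L^2(D)$ is an isomorphism (which is precisely the norm equivalence used to define $X(D)$), the set $\{\Delta\ov{\phi} : \phi \in X(D)\}$ is all of $L^2(D)$, forcing the pointwise equality $\alpha\Delta Tf = -z$ almost everywhere in $D$.

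Combining the pieces, $\Delta Tf = -z/\alpha$ will belong to $H^2(D)$ with $\|\Delta Tf\|_{H^2(D)} \leq C\|z\|_{H^2(D)} \leq C\|f\|_{L^2(D)}$, and taking the Laplacian once more will give the asserted estimate on $\Delta^2 Tf$. The only delicate point I anticipate is justifying the surjectivity of $\Delta$ on $X(D)$, but this is nothing other than the solvability of the Poisson problem with $L^2$ data that is already built into the variational framework of the paper; the only other nuisance is verifying that the Leibniz rule delivers $z/\alpha \in H^2(D)$, which reduces to the $L^\infty$-bounds on $1/\alpha$ and its first two weak derivatives provided by $\alpha \in W^{2,\infty}(D)$ and $\alpha \geq \alpha_{\text{min}}>0$.
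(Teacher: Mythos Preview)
Your argument is correct and rests on the same skeleton as the paper's proof: both identify $\alpha\Delta Tf$ as (minus) the solution of the Dirichlet Poisson problem with data $\beta f$, and then use the $W^{2,\infty}$ regularity and positive lower bound on $\alpha$ to pass regularity through the division by $\alpha$. The execution differs slightly. The paper asserts the strong-form identity $\Delta(\alpha\Delta Tf)=\beta f$ with $\alpha\Delta Tf\big|_{\partial D}=0$ directly, extracts only the $H^1$ bound $\|\alpha\Delta Tf\|_{H^1}\le C\|f\|_{L^2}$, and then bootstraps by expanding $\nabla(\alpha\Delta Tf)$ to get $\Delta Tf\in H^1$ and finally expanding $\Delta(\alpha\Delta Tf)$ to isolate $\alpha\Delta^2 Tf$. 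You instead derive the identity $\alpha\Delta Tf=-z$ variationally via the surjectivity of $\Delta:X(D)\to L^2(D)$, invoke the full $H^2$ elliptic regularity for $z$ at once, and perform a single Leibniz-rule division. Your route is a bit more careful about justifying the strong-form identity from the weak formulation, and avoids the two-stage bootstrap; the paper's route is more explicit about which derivatives of $\alpha$ are actually used at each step. Either way the content is the same.
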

\begin{proof}
To prove the claim, notice that by the boundary value problem associated with \eqref{def-T} we have that $\Delta \alpha \Delta Tf =  \beta f $ in $D$ with zero trace on the boundary. This implies that 
$$ \| \alpha \Delta Tf \|_{H^1(D)} \leq C \| f \|_{L^2(D)}$$ 
by  the well-posedness estimate for the Poisson problem in $H^1(D)$. 
Using the identity 
$$\grad\left(\alpha \Delta Tf \right) = (\Delta Tf ) \grad \alpha  + \alpha \grad ( \Delta Tf)$$
we have that $\| \Delta Tf \|_{H^1(D)} \leq C  \| f \|_{L^2(D)}$ since $\alpha \in W^{2,\infty}(D)$ and bounded below. Now we use that 
$$ \Delta \alpha \Delta Tf = \alpha \Delta^2 Tf +2 \grad \alpha \cdot \grad(\Delta Tf) +(\Delta Tf) \Delta\alpha$$
we similarly obtain the $\| \Delta^2 Tf \|_{L^2(D)} \leq C \| f \|_{L^2(D)}$ by again using the fact that $\alpha \in W^{2,\infty}(D)$ and bounded below. 
\end{proof}

%%%%%%%%%%%%%%%%%%%%%%%%%%%%%%%%%%%%%%%%%%%%%%%%%%%%%%%%%%%%%
\section{Analysis of the Approximation}\label{conv-analysis} 
In this section, we will analyze the proposed numerical method for approximation of the solutions to \eqref{eig-varform}. This method uses a conforming computational space to compute the Galerkin approximation of the eigenvalues. The computational space is taken to be the span of finitely many Dirichlet eigenfunctions for the Laplacian. In \cite{zite} this method is used to approximate eigenvalues associated with a fourth-order problem with a Robin condition, which shows the versatility of this approximation method.

%%%%%%%%%%%%%%%%%%%%%%%%%%%%%%%%%%%%%%%%%%%%%%%%%%%%%%%%%%
 \subsection{Analysis of the Approximation Space}\label{approx}
 This part of the paper is dedicated to the approximation space. We will first define the approximation space as well as give some known result for it's approximation property in $X(D)$. To this end, our spectral approximation space we will take the conforming subspace of $X(D)$ be given by 
 $$X_N (D) = \text{span}\big\{ \phi_j (x) \big\}_{j=1}^N \quad \text{ for some fixed } \quad N \in \N.$$
Here the basis functions $\phi_j$ are the $j$th Dirichlet eigenfunction for the Laplacian and the corresponding eigenvalue $\lambda_j \in \R_{+}$ for the domain $D$. We arrange the sequence of Dirichlet eigenvalues in non-decreasing order such that $0 < \lambda_j \leq \lambda_{j+1}$ for all $j \in\N$. Recall, that the Dirichlet eigenpairs $(\lambda_j  , \phi_j)_{j=1}^{\infty}  \in \R_{+} \times H^1_0(D)$ satisfy
\begin{align}
- \Delta \phi_j = \lambda_j \phi_j \,\, \text{ in } \,\, D \quad \text{ where }  \quad  \| \phi_j\|_{L^2(D)}=1. \label{dirichlet-eig}
\end{align}
By elliptic regularity, we have that the basis functions $\phi_j \in X(D)$ and by Theorem 3.7 in \cite{two-eig-cbc} we have that they form an orthogonal basis for the variational space $X(D)$. Recall, the Weyl's law estimate (see for e.g. \cite{weyl-law}) that there exist two constants $c_1 , c_2 >0$ independent of $j$ such that 
 $$c_1 j^{2/d} \leq \lambda_j \leq c_2 j^{2/d}\quad \text{ for all } \quad j \gg 1.$$
 This will be used to determine the error estimates for our computational space. 
Since the Dirichlet eigenfunctions $\{ \phi_j \}_{j=1}^\infty$ form an orthonormal basis of $L^2(D)$ we have that 
\begin{align}
f = \sum\limits_{j=1}^{\infty} (f,\phi_j)_{L^2(D)} \phi_j \quad \text{ giving that} \quad \Delta^m f  = \sum\limits_{j=1}^{\infty} (-\lambda_j)^m (f,\phi_j)_{L^2(D)} \phi_j.\label{fouier-series}
\end{align}
for all $f \in L^2(D)$. We can now define the domain of the $m$th power of the Laplacian as the subset  of $L^2(D)$ given by  
$$\mathscr{D} \big( \Delta^m \big) :=\big\{ f \in L^2(D) \, : \,  \Delta^m f \in L^2(D)  \,\, \text{as defined by equation \eqref{fouier-series}}\big\}. $$ 
Therefore, by the definition of $\mathscr{D} \big( \Delta^m \big)$ we can conclude that it is a Hilbert space with the associated norm  
\begin{align}
\| f \|^2_{\mathscr{D} ( \Delta^m )} = \sum\limits_{j=1}^{\infty} \lambda_j^{2m} \big|(f,\phi_j)_{L^2(D)}\big|^2 <\infty. \label{laplacian-m}
\end{align} 
Theorem 3.1 in \cite{zite} gives that $X(D) \subseteq \mathscr{D} ( \Delta )$ and $\|  \cdot \|_{X(D)}=\| \cdot \|_{\mathscr{D} ( \Delta )}$. By this fact we can conclude that the $L^2(D)$ projection onto the approximation space $X_N (D)$ denote by $\Pi_N :X(D) \to X_N (D)$  is given by
 $$\Pi_N f =  \sum\limits_{j=1}^{N}  (f,\phi_j)_{L^2(D)} \phi_j \quad \text{for some fixed} \quad N \in \N$$ 
converges point-wise on $X(D)$. 
We now prove that for sufficiently smooth $\alpha$ the range of $T$ is contained in $\mathscr{D} ( \Delta^2 )$.
\begin{theorem}\label{T-subset}
Let the operator $T$ be as defined in \eqref{def-T}. Assume that $\alpha \in W^{2,\infty}(D)$ then Range$(T)\subseteq \mathscr{D} ( \Delta^2 )$ such that $\| Tf \|_{\mathscr{D} ( \Delta^2 )} \leq C \| f \|_{L^2(D)}$. 
\end{theorem}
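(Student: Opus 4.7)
The plan is to combine the pointwise $L^2$ regularity bound on $\Delta^2 Tf$ from Theorem \ref{regularity-T} with an iterated application of Green's second identity, so as to translate that bound into the Fourier-series norm defining $\mathscr{D}(\Delta^2)$.

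First, I would invoke Theorem \ref{regularity-T} to conclude that, under the assumption $\alpha \in W^{2,\infty}(D)$, $\Delta^2 Tf \in L^2(D)$ with $\|\Delta^2 Tf\|_{L^2(D)} \leq C\|f\|_{L^2(D)}$. The goal then is to show that the Fourier coefficients $(Tf,\phi_j)_{L^2(D)}$ decay like $\lambda_j^{-2}(\Delta^2 Tf,\phi_j)_{L^2(D)}$, which by Parseval applied to $\Delta^2 Tf$ immediately yields both the summability needed to place $Tf$ in $\mathscr{D}(\Delta^2)$ and the stated norm inequality.

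The central computation is to justify the identity $\lambda_j^2 (Tf,\phi_j)_{L^2(D)} = (\Delta^2 Tf,\phi_j)_{L^2(D)}$. To this end I would apply Green's second theorem twice. The first application uses that $Tf \in X(D)$ and $\phi_j \in X(D)$ both vanish on $\partial D$, giving
\[
\lambda_j (Tf,\phi_j)_{L^2(D)} = -(Tf,\Delta\phi_j)_{L^2(D)} = -(\Delta Tf,\phi_j)_{L^2(D)},
\]
with boundary terms vanishing. The second application requires that $\Delta Tf$ be zero on $\partial D$, which holds by the boundary condition in the source problem \eqref{def-T}, and also that $\Delta Tf$ belongs to $H^2(D)$ with zero trace; this is exactly the intermediate regularity $\|\Delta Tf\|_{H^1(D)} \leq C\|f\|_{L^2(D)}$ together with $\Delta(\Delta Tf) \in L^2(D)$ that are established in the proof of Theorem \ref{regularity-T}. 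With that regularity in hand, Green's identity gives
\[
\lambda_j (\Delta Tf,\phi_j)_{L^2(D)} = -(\Delta Tf,\Delta\phi_j)_{L^2(D)} = -(\Delta^2 Tf,\phi_j)_{L^2(D)},
\]
and combining the two relations produces the desired identity.

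Finally, substituting into the definition \eqref{laplacian-m} of the $\mathscr{D}(\Delta^2)$-norm,
\[
\|Tf\|_{\mathscr{D}(\Delta^2)}^2 = \sum_{j=1}^{\infty}\lambda_j^4\bigl|(Tf,\phi_j)_{L^2(D)}\bigr|^2 = \sum_{j=1}^{\infty}\bigl|(\Delta^2 Tf,\phi_j)_{L^2(D)}\bigr|^2 = \|\Delta^2 Tf\|_{L^2(D)}^2,
\]
by Parseval's identity applied to $\Delta^2 Tf \in L^2(D)$, and then invoking Theorem \ref{regularity-T} closes the estimate. The main technical obstacle is ensuring that the second application of Green's identity is legitimate; this requires the intermediate regularity $\Delta Tf \in H^1_0(D)$ and $\Delta^2 Tf \in L^2(D)$ that was already packaged inside the proof of Theorem \ref{regularity-T}, and which in turn leans on $\alpha \in W^{2,\infty}(D)$ and the absence of reentrant corners on $\partial D$.
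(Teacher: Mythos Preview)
Your proposal is correct and follows essentially the same route as the paper: invoke Theorem~\ref{regularity-T} for the $L^2$ bound on $\Delta^2 Tf$ and the boundary condition $\Delta Tf=0$, apply Green's second identity twice to obtain $(\Delta^2 Tf,\phi_j)_{L^2(D)}=\lambda_j^2(Tf,\phi_j)_{L^2(D)}$, and then use Parseval to identify $\|Tf\|_{\mathscr{D}(\Delta^2)}$ with $\|\Delta^2 Tf\|_{L^2(D)}$. If anything, you are more explicit than the paper about the regularity needed to justify the second integration by parts.
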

\begin{proof}
In order to prove the claim we first note that for $\alpha \in W^{2,\infty}(D)$ Theorem \ref{regularity-T} implies that any $u \in \text{Range}(T)$ satisfies 
$$u \in X(D) \,\,\, \text{ such that} \,\, \, \Delta^2 u  \in L^2(D) \,\,\, \text{and} \,\,\, \Delta u =0 \,\,\, \text{on}\,\,\,  \partial D.$$ 
Therefore, notice that by applying Green's second Theorem twice we have that 
$$ (\Delta^2 u,\phi_j)_{L^2(D)} = ( u, \Delta^2 \phi_j)_{L^2(D)}= \lambda_j^2( u,  \phi_j)_{L^2(D)} \quad \text{ for all } \quad j \in \N.$$
We then have that 
$$\| \Delta^2 u \|_{L^2(D)}^2 = \sum\limits_{j=1}^{\infty} \big|(\Delta^2 u,\phi_j)_{L^2(D)}\big|^2 = \sum\limits_{j=1}^{\infty} \lambda_j^{4} \big|(u,\phi_j)_{L^2(D)}\big|^2$$
which implies that $\| \Delta^2 u \|_{L^2(D)}=\| u \|_{\mathscr{D} ( \Delta^2 )}$. Proving the claim.
\end{proof}

%%%%%%%%%%%%%%%%%%%%%%%%%%%%%%%%%%%%%%%%%%%%%%%%%%%%%%%%%%%%%%%%
\subsection{Convergence and Error Analysis}\label{conv-eig}
This section is dedicated to studying the convergence analysis and error estimates for the Dirichlet spectral-Galerkin method for approximating the simply supported vibrating plate eigenvalues. Here we will use the variational formulation along with the analysis of the approximation space $X_N (D)$.

To begin, we will first need to prove that the Galerkin approximation of the solution operator converges in norm as $N \to \infty$. Now define the Galerkin approximation of the solution operator as $T_N: X(D) \to X_N(D)$ such that for any $f \in X(D)$  
\begin{align}
a\big(T_Nf,v_N \big)=b(f,v_N)  \quad \text{ for all } \,\,\, v_N \in X_N(D). \label{def-TN}
\end{align} 
Since, Range$(T_N) \subseteq X_N (D)$ we have that $T_N$ has finite rank and is therefore compact. Simple calculations give that $T_N$ restricted to the computational space $X_N(D)$ is an $a( \cdot \, , \, \cdot)$ self-adjoint operator on the Hilbert space $X_N(D)$. Therefore, the approximation of the solution operator $T_N$ has $N$ eigenvalues denoted $\mu_{j,N}>0$ and corresponding eigenfunctions $u_{j,N} \in X_N(D)$ for $j\in \{ 1, \cdots , N\}$ counting multiplicity. This implies that $u_{j,N} \in X_N(D)$ satisfying
\begin{align}
a(u_{j,N},v_N)=\tau_{j,N} b(u_{j,N},v_N)  \quad \text{ for all } \,\,\, v_N \in X_N(D) \label{spec-varform} 
\end{align} 
where $\tau_{j,N} = \mu_{j,N}^{-1}$ and $\| u_{j,N} \|_{L^2(D)}=1$.  It is clear that $(\tau_{j,N} , u_{j,N} )_{j=1}^N \in \R_{+} \times X_N(D)$ are the $j$th Galerkin approximation of the vibrating plate eigenpair satisfying \eqref{eig-varform}. 

By appealing to the analysis in \cite{osborn} we will have the convergence provided that $T_N$ converges to $T$ in the operator norm  as $N \to \infty$. 
Therefore, by the standard Galerkin orthogonality, we may appeal to Cea's Lemma (Chapter 9 of \cite{numerics-book}) to obtain  
$$\big\| Tf - T_N f \big\|_{X(D)} \leq C \big\| Tf - v_N \big\|_{X(D)} \quad \text{ for any } \,\,\, v_N  \in X_N(D)$$
and for all $f \in X(D)$. Since $\Pi_NT f \in X_N(D)$ for all $f \in X(D)$ where $\Pi_N$ is the $L^2(D)$ projection onto $X_N(D)$ we have that 
$$\big\| Tf - T_N f \big\|_{X(D)} \leq C  \big\| (I-\Pi_N)T f \big\|_{X(D)}$$
This estimate will be used to prove convergence and error estimates for the approximation of the eigenvalues. 

\begin{theorem} \label{discrete-eig-conv}
Let $(\tau_j , u_j)$ and $(\tau_{j,N} , u_{j,N})$ be the $j$th eigenpairs for \eqref{eig-varform} and \eqref{spec-varform} respectively. Then as $N \to \infty$ we have that $\tau_{j,N} \to \tau_{j}$ and $u_{j,N} \to u_{j}$ in $X(D)$.  
\end{theorem}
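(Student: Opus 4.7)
The plan is to reduce convergence of the discrete eigenpairs to norm convergence $T_N \to T$ in $\mathcal{L}(X(D))$, and then invoke the abstract spectral approximation framework of Osborn. Since $T$ is compact and self-adjoint with respect to $a(\cdot,\cdot)$ on $X(D)$ and $T_N$ is its Galerkin projection onto the conforming subspace $X_N(D)$, norm convergence of the solution operators forces convergence of each individual eigenvalue (counting multiplicity) and of the associated eigenfunctions in $X(D)$, where the normalization $\|u_{j,N}\|_{L^2(D)}=1$ fixes the sign/phase up to the usual subtleties inside eigenspaces.

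First I would establish the quantitative bound $\|T-T_N\|_{\mathcal{L}(X(D))}\to 0$. Starting from the Cea-type inequality already displayed in the excerpt,
\begin{equation*}
\|Tf - T_N f\|_{X(D)} \;\leq\; C\,\|(I-\Pi_N)Tf\|_{X(D)},
\end{equation*}
I would expand $Tf$ in the Dirichlet eigenbasis. Because $X(D)=\mathscr{D}(\Delta)$ with equivalent norms, and because $\{\phi_j\}$ is orthogonal both in $L^2(D)$ and (after weighting by $\lambda_j$) in $X(D)$, one has
\begin{equation*}
\|(I-\Pi_N)Tf\|_{X(D)}^{2} \;=\; \sum_{j>N} \lambda_j^{2}\,\bigl|(Tf,\phi_j)_{L^2(D)}\bigr|^{2}.
\end{equation*}
Pulling out $\lambda_{N+1}^{-2}$ from this tail and using Theorem \ref{T-subset}, which gives $Tf\in\mathscr{D}(\Delta^2)$ together with the characterization of the $\mathscr{D}(\Delta^2)$-norm via \eqref{laplacian-m}, I obtain
\begin{equation*}
\|(I-\Pi_N)Tf\|_{X(D)}^{2} \;\leq\; \lambda_{N+1}^{-2}\,\|Tf\|_{\mathscr{D}(\Delta^2)}^{2} \;\leq\; C\,\lambda_{N+1}^{-2}\,\|f\|_{L^2(D)}^{2}.
\end{equation*}
Finally, Weyl's law $\lambda_{N+1}\geq c_1 (N+1)^{2/d}$ yields $\|T-T_N\|_{\mathcal{L}(X(D))}\leq C\,N^{-2/d}\to 0$.

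With norm convergence in hand, the convergence of the eigenvalues and eigenfunctions follows from the classical result in \cite{osborn}: for a compact operator on a Hilbert space approximated in operator norm by a sequence of compact operators, each isolated eigenvalue of finite multiplicity is approximated by a cluster of discrete eigenvalues that converge to it, and the associated spectral projections converge in norm. Applied to $T$ and $T_N$, this delivers $\mu_{j,N}\to \mu_j$, hence $\tau_{j,N}=\mu_{j,N}^{-1}\to \tau_j$, and convergence of $u_{j,N}$ to $u_j$ in $X(D)$ after selecting representatives inside the (possibly multi-dimensional) eigenspace of $\tau_j$ and using the $L^2(D)$ normalization to fix scaling.

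The main technical obstacle is the first step: the regularity upgrade from $Tf\in X(D)$ to $Tf\in\mathscr{D}(\Delta^2)$ is essential, because $X(D)$-membership alone only gives $\sum_j \lambda_j^{2}|(Tf,\phi_j)_{L^2}|^2<\infty$, which does not by itself produce a decaying rate for the tail. Theorem \ref{regularity-T} and Theorem \ref{T-subset} are exactly what make the tail estimate quantitative, and hence what make the operator-norm convergence, rather than mere pointwise or strong convergence, available. Once that ingredient is in place, the remainder of the proof is a direct invocation of the abstract framework.
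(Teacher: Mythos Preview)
Your argument is internally correct but it proves a weaker statement than the theorem claims. You invoke Theorem~\ref{T-subset}, whose hypothesis is $\alpha\in W^{2,\infty}(D)$; however, Theorem~\ref{discrete-eig-conv} is stated only under the standing assumptions $\alpha,\beta\in L^{\infty}(D)$ bounded away from zero. So as written you have established convergence only in the smoother regime, and your claim that ``the regularity upgrade from $Tf\in X(D)$ to $Tf\in\mathscr{D}(\Delta^2)$ is essential'' is not right for this theorem.

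The paper's proof avoids this entirely by a soft argument: $T$ is compact on $X(D)$ (this uses only the $L^\infty$ bounds), and $\Pi_N\to I$ pointwise on $X(D)$ since $\{\phi_j\}$ is an orthogonal basis of $X(D)$. A standard fact (pointwise convergence of a bounded sequence composed with a compact operator yields norm convergence) then gives $\|(I-\Pi_N)T\|_{X(D)\to X(D)}\to 0$, hence $\|T-T_N\|_{X(D)\to X(D)}\to 0$, and one finishes with \cite{osborn}. No rate is obtained, but none is asserted. Your route, by contrast, buys the explicit rate $\mathcal{O}(N^{-2/d})$ at the cost of the extra smoothness on $\alpha$; that is essentially the content of the later Theorem~\ref{constant-conv}, not of Theorem~\ref{discrete-eig-conv}. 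To fix your write-up for the stated theorem, replace the tail/Weyl estimate by the compactness argument above.
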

\begin{proof}
Recall that the operator $T$ is compact. Therefore, we have that the point-wise convergence of $\Pi_N$ to the identity operator on $X(D)$ implies the norm convergence i.e. $\big\| (I-\Pi_N)T\big\|_{X(D) \mapsto X(D)}\to 0$ as $N \to \infty$ (see for e.g \cite{FEM-an}). The result follows from 
$$\big\|T -T_N \big\|_{X(D) \mapsto X(D)} \leq  C \big\| (I-\Pi_N)T  \big\|_{X(D)  \mapsto X(D)} \to 0 \quad \text{ as } \quad N \to \infty$$
and then appealing to the results in \cite{osborn}. 
\end{proof}
 
To determine the convergence rate we use the fact that for $(\tau_j , u_j)$ and $(\tau_{j,N} , u_{j,N})$ being the $j$th eigenpairs for \eqref{eig-varform} and \eqref{spec-varform} then   
\begin{align}
a\big(u_{j,N}-u_{j} , u_{j,N} -u_{j}\big) -\tau_j  b\big(u_{j,N}-u_{j} , u_{j,N} -u_{j}\big) = \big( \tau_{j,N} -\tau_{j} \big) b\big(u_{j,N} , u_{j,N}\big)\label{eig-difference}
\end{align}
for any $N \in \N$. By the boundedness of the sesquilinear forms $a( \cdot \, , \, \cdot)$ and $b( \cdot \, , \, \cdot)$ defined by \eqref{forms} along with the estimate $\beta \geq \beta_{\text{min}}$ we have that 
$$\big| \tau_{j,N} - \tau_j \big| \leq C(1+ \tau_j) \big\| u_{j,N} -u_{j} \big\|^2_{X(D)}$$
where we have used the fact that $\| u_{j,N} \|_{L^2(D)}=1$. 
Notice, that we can appeal to the results in \cite{babuska-osborn}, which gives that 
$$\| u_{j,N} -u_{j} \|_{X(D)} \leq C \big\|(T - T_N) \big\|_{E(\tau_j) \mapsto X(D)}.$$
Here $E(\tau_j)$ is the finite-dimensional eigenspace corresponding to $\tau_j$ where $u \in E(\tau_j)$ satisfies $Tu = \tau_j^{-1} u$.  Therefore, to estimate the convergence rate of the eigenvalues we need to estimate the convergence rate in the operator norm of $T_N$ to $T$ as $N \to \infty$. 
To do so, recall that by  Cea's Lemma, we have that 
\begin{align*}
\big\|(T -T_N) \big\|^2_{E(\tau_j) \mapsto X(D)} &=\sup\limits_{u \in E(\tau_j) \, : \,  \| u \|_{X(D)} =1} \big\|(T -T_N)u \big\|^2_{X(D)}\\
			&\leq C \sup\limits_{u \in E(\tau_j) \, : \,  \| u \|_{X(D)} =1} \big\|(I -\Pi_N)Tu \big\|^2_{X(D)}.
\end{align*}
We recall that $\Pi_N :X(D) \to X_N (D)$ denotes the $L^2(D)$ projection operator via the Fourier-Series with the Dirichlet eigenfunctions. By definition we have that 
\begin{align*}
\big\|(T -T_N) \big\|^2_{E(\tau_j) \mapsto X(D)} &\leq C \sup\limits_{u \in E(\tau_j) \, : \,  \| u \|_{X(D)} =1} \sum\limits_{j=N+1}^{\infty} \lambda_j^{2} \big|(Tu,\phi_j)_{L^2(D)}\big|^2\\
			&\leq  \frac{C}{ \lambda_{N+1}^{2(m-1)} } \, \sup\limits_{u \in E(\tau_j) \, : \,  \| u \|_{X(D)} =1}  \sum\limits_{j=N+1}^{\infty} \lambda_j^{2m} \big|(Tu,\phi_j)_{L^2(D)}\big|^2.  
\end{align*}
We now assume that the eigenspace $E(\tau_j) \subseteq \mathscr{D} ( \Delta^m)$ then we have that 
\begin{align*}
\big\|(T -T_N) \big\|^2_{E(\tau_j) \mapsto X(D)} &\leq \frac{C\tau_j^{-2}}{\lambda_{N+1}^{2(m-1)} } \, \,  \sup\limits_{u \in E(\tau_j) \, : \,  \| u \|_{X(D)} =1} \sum\limits_{j=N+1}^{\infty}  \lambda_j^{2m} \big|(u,\phi_j)_{L^2(D)}\big|^2 \\
								 & \leq \frac{C}{ (N+1)^{4(m-1)/d}} \, \sup\limits_{u \in E(\tau_j) \, : \,  \| u \|_{X(D)} =1} \big\| (I-\Pi_N) u \big\|^2_{\mathscr{D} ( \Delta^m)}
\end{align*}
where we have used the Weyl's law estimate. Given the above calculations we have the following convergence estimate for the Dirichlet spectral-Galerkin approximation.

\begin{theorem} \label{eig-convrate}
Let $\tau_{j}$ and $\tau_{j,N}$ be the $j$th eigenvalues for \eqref{eig-varform} and \eqref{spec-varform} respectively. Assume that the eigenspace $E(\tau_{j}) \subseteq \mathscr{D} ( \Delta^m)$ for some $m \in \N$ then
$$\big| \tau_{j,N}-\tau_{j} \big| \leq \frac{C}{ (N+1)^{4(m-1)/d}} \sup\limits_{u \in E(\tau_j) \, : \,  \| u \|_{X(D)} =1} \big\| (I-\Pi_N) u \big\|^2_{\mathscr{D} ( \Delta^m)}$$
where the constant $C>0$ is independent of $N$ and $\Pi_N:X(D) \to X_N (D)$ is the $L^2(D)$ projection onto $X_N (D)$.
\end{theorem}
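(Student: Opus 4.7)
The plan is to chain together a handful of estimates, all of which have essentially been set up in the paragraph preceding the theorem statement. The roadmap is: first, convert the eigenvalue gap into a squared $X(D)$ error for the eigenfunctions using the identity \eqref{eig-difference}; second, bound that eigenfunction error by the operator norm $\|T - T_N\|_{E(\tau_j) \to X(D)}$ via the Babuska-Osborn framework in \cite{babuska-osborn}; third, bound that operator norm through Cea's Lemma by the $L^2(D)$ projection error of $T$ applied to vectors in $E(\tau_j)$; and finally, extract the advertised rate from the Fourier expansion \eqref{fouier-series} together with Weyl's law.

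More concretely, I would apply continuity of $a(\cdot,\cdot)$ and $b(\cdot,\cdot)$, together with $\beta \geq \beta_{\text{min}}$ and the normalization $\|u_{j,N}\|_{L^2(D)} = 1$, to the identity \eqref{eig-difference} to get
$$|\tau_{j,N} - \tau_j| \leq C(1+\tau_j)\,\|u_{j,N} - u_j\|_{X(D)}^2.$$
The Babuska-Osborn estimate then yields
$$\|u_{j,N} - u_j\|_{X(D)} \leq C\,\|T - T_N\|_{E(\tau_j) \to X(D)},$$
after which Cea's Lemma, with the admissible test function $\Pi_N T u \in X_N(D)$, gives
$$\|(T - T_N)u\|_{X(D)} \leq C\,\|(I - \Pi_N) T u\|_{X(D)}$$
uniformly for $u$ in the unit ball of $E(\tau_j)$.

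The rate extraction proceeds by recalling that $\|\cdot\|_{X(D)} = \|\cdot\|_{\mathscr{D}(\Delta)}$ on $X(D)$, so that the projection error admits the Fourier representation
$$\|(I-\Pi_N)Tu\|_{X(D)}^2 = \sum_{k=N+1}^{\infty} \lambda_k^{2}\,|(Tu,\phi_k)_{L^2(D)}|^2.$$
I would factor out $\lambda_{N+1}^{-2(m-1)}$ using the monotonicity $\lambda_k \geq \lambda_{N+1}$ for $k \geq N+1$, leaving the higher-order tail $\|(I-\Pi_N)Tu\|^2_{\mathscr{D}(\Delta^m)}$. For $u \in E(\tau_j)$ the identity $Tu = \tau_j^{-1} u$ lets me replace $Tu$ by $u$ and absorb $\tau_j^{-2}$ into the generic constant. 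Finally, Weyl's law $\lambda_{N+1} \geq c\,(N+1)^{2/d}$ converts $\lambda_{N+1}^{-2(m-1)}$ into a constant times $(N+1)^{-4(m-1)/d}$, delivering the stated bound.

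The only subtle point is that the Fourier manipulations implicitly require $Tu \in \mathscr{D}(\Delta^m)$, which is precisely what the hypothesis $E(\tau_j) \subseteq \mathscr{D}(\Delta^m)$, combined with the eigenfunction identity $Tu = \tau_j^{-1} u$, supplies. Beyond that verification, the argument is bookkeeping: convergence has already been established in Theorem \ref{discrete-eig-conv}, and the rate depends only on the decay of Fourier coefficients against the Dirichlet basis together with the sharp Weyl asymptotics.
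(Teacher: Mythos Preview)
Your proposal is correct and follows essentially the same route as the paper: the identity \eqref{eig-difference} to pass to $\|u_{j,N}-u_j\|_{X(D)}^2$, the Babuska--Osborn bound by $\|T-T_N\|_{E(\tau_j)\to X(D)}$, Cea's Lemma with $\Pi_N Tu$, the Fourier tail estimate with the $\lambda_{N+1}^{-2(m-1)}$ factorization, the substitution $Tu=\tau_j^{-1}u$, and Weyl's law. Your remark that the hypothesis $E(\tau_j)\subseteq \mathscr{D}(\Delta^m)$ is exactly what justifies the higher-order tail manipulation matches the paper's reasoning as well.
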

\begin{proof}
To prove the claim, we recall that from the above analysis we have that 
$$\big| \tau_{j,N} - \tau_j \big| \leq C(1+ \tau_j) \big\| u_{j,N} -u_{j} \big\|^2_{X(D)}$$
as well as 
$$\| u_{j,N} -u_{j} \|^2_{X(D)} \leq \frac{C}{ (N+1)^{4(m-1)/d}} \sup\limits_{u \in E( \tau_j) \, : \,  \| u \|_{X(D)} =1} \big\| (I-\Pi_N) u \big\|^2_{\mathscr{D} ( \Delta^m)}.$$
This proves the result.
\end{proof} 
 
It is clear that the above analysis also gives a convergence rate for the corresponding eigenfunctions. For completeness, we now state the error estimate for the eigenfunctions in the following result. Due to the continuous embedding of $H^2(D)$ in $\mathscr{C}({D})$ the following result gives an $L^{\infty}(D)$ norm estimate of the error for the eigenfunctions. 
 
 \begin{theorem} \label{eigfunc-convrate}
Let $u_{j}$ and $u_{j,N}$ be the $j$th eigenfunctions for \eqref{eig-varform} and \eqref{spec-varform} respectively. Assume that the eigenspace $E(\tau_{j}) \subseteq \mathscr{D} ( \Delta^m)$ for some $m \in \N$ then 
$$ \big\| u_{j,N}-u_{j} \big\|_{X(D)}  \leq \frac{C}{ (N+1)^{2(m-1)/d}} \sup\limits_{u \in E(\tau_j) \, : \,  \| u \|_{X(D)} =1} \big\| (I-\Pi_N) u \big\|_{\mathscr{D} (\Delta^m)}$$
where the constant $C>0$ is independent of $N$ and $\Pi_N :X(D) \to X_N (D)$ is the $L^2(D)$ projection onto $X_N (D)$. 
\end{theorem}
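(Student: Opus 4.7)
The plan is to observe that the stated eigenfunction estimate is essentially the square root of an intermediate bound already obtained during the proof of Theorem \ref{eig-convrate}, so no substantively new argument is needed. In that earlier proof the Babu\v{s}ka--Osborn framework from \cite{babuska-osborn} produced
$$\| u_{j,N} - u_j \|_{X(D)} \leq C \, \big\| T - T_N \big\|_{E(\tau_j) \mapsto X(D)},$$
and this inequality, combined with the operator-norm estimate then developed there, yields the claim provided we elect not to square the eigenfunction error before extracting the $N$-dependence.

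Concretely, I would fix a unit vector $u \in E(\tau_j)$ and apply Cea's lemma exactly as in Theorem \ref{eig-convrate} to obtain
$$\big\| (T - T_N) u \big\|_{X(D)} \leq C \, \big\| (I - \Pi_N) T u \big\|_{X(D)}.$$
Expanding in the Dirichlet basis and using the norm identity $\|\cdot\|_{X(D)} = \|\cdot\|_{\mathscr{D}(\Delta)}$ recorded in Section \ref{approx}, the right-hand side satisfies
$$\big\| (I - \Pi_N) T u \big\|_{X(D)}^2 = \sum_{k=N+1}^{\infty} \lambda_k^{2} \, \big|(Tu, \phi_k)_{L^2(D)}\big|^{2} \leq \frac{1}{\lambda_{N+1}^{2(m-1)}} \sum_{k=N+1}^{\infty} \lambda_k^{2m} \, \big|(Tu, \phi_k)_{L^2(D)}\big|^{2}.$$
Since $T u = \tau_j^{-1} u$ on $E(\tau_j)$, the last sum equals $\tau_j^{-2} \, \| (I - \Pi_N) u \|_{\mathscr{D}(\Delta^m)}^2$. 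Weyl's law then converts $\lambda_{N+1}^{-(m-1)}$ into $C (N+1)^{-2(m-1)/d}$, and absorbing $\tau_j^{-1}$ along with the universal constants into $C$ gives the claimed bound after taking square roots.

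There is no genuine obstacle here; the result is a direct corollary of the chain of estimates behind Theorem \ref{eig-convrate}. The only bookkeeping point is that the exponent $2(m-1)/d$ is exactly half of the exponent $4(m-1)/d$ appearing for the eigenvalues, because the eigenvalue convergence was extracted from the square of the eigenfunction error through the identity \eqref{eig-difference}. Finally, the $L^{\infty}(D)$ error statement flagged in the paragraph preceding the theorem follows immediately from the continuous embedding of $H^2(D)$ in $\mathscr{C}(D)$, valid in dimensions $d = 2, 3$.
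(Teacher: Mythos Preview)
Your proposal is correct and follows exactly the paper's approach: the paper's own proof is the single sentence ``The result follows from the above analysis,'' and the analysis you reproduce (Babu\v{s}ka--Osborn bound, Cea's lemma, Fourier tail estimate, $Tu=\tau_j^{-1}u$, Weyl's law) is precisely that preceding analysis with the square root taken instead of the square.
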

\begin{proof}
The result follows from the above analysis.
\end{proof}
 
From Theorems \ref{eig-convrate} and \ref{eigfunc-convrate} we see that the convergence result depends on if the eigenspace is contained in $\mathscr{D} ( \Delta^m)$. It is clear that provided $\alpha  \in W^{2,\infty}(D)$ then $E(\tau_{j}) \subseteq \mathscr{D} (\Delta^2)$ by Theorem \ref{T-subset} for each $j \in \N$. Therefore, Theorem \ref{eig-convrate} gives that the convergence rate for the eigenvalues is at least $4/d$ where $d=2,3$. The convergence rate will be studied numerically in the coming section.  

\begin{theorem} \label{constant-conv}
Let $(\tau_j , u_j)$ and $(\tau_{j,N} , u_{j,N})$ be the $j$th eigenpairs for \eqref{eig-varform} and \eqref{spec-varform} respectively. Assume that $\alpha \in W^{2,\infty}(D)$  then
$$\big| \tau_{j,N}-\tau_{j} \big| \leq \frac{C}{ (N+1)^{4/d}} \quad \text{ and } \quad  \big\| u_{j,N}-u_{j} \big\|_{X(D)}  \leq \frac{C}{ (N+1)^{2/d}}$$
where the constant $C>0$ is independent of $N$.
\end{theorem}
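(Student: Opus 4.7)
The plan is to read this statement as the specialization of Theorem~\ref{eig-convrate} and Theorem~\ref{eigfunc-convrate} to the case $m=2$, with the hypothesis on the eigenspace supplied by Theorem~\ref{T-subset}. Substituting $m=2$ into the exponents $4(m-1)/d$ and $2(m-1)/d$ immediately produces the advertised rates $4/d$ and $2/d$, so the only real work is to justify that $E(\tau_j)\subseteq\mathscr{D}(\Delta^2)$ and then to absorb the supremum on the right-hand side of those two theorems into the constant.

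First I would observe that every eigenfunction $u_j$ of \eqref{eig-varform} lies in $\text{Range}(T)$: the identity $a(u_j,v)=\tau_j b(u_j,v)$ for all $v\in X(D)$ is exactly the statement $u_j=\tau_j\,T u_j$. Under the hypothesis $\alpha\in W^{2,\infty}(D)$, Theorem~\ref{T-subset} gives $\text{Range}(T)\subseteq\mathscr{D}(\Delta^2)$, so $E(\tau_j)\subseteq\mathscr{D}(\Delta^2)$ for every $j\in\N$. Thus the regularity assumption needed to apply Theorem~\ref{eig-convrate} and Theorem~\ref{eigfunc-convrate} with $m=2$ is satisfied.

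Next I would dispose of the supremum factor $\sup_{u\in E(\tau_j),\,\|u\|_{X(D)}=1}\|(I-\Pi_N)u\|_{\mathscr{D}(\Delta^2)}$. The definition \eqref{laplacian-m} of the $\mathscr{D}(\Delta^2)$ norm via the Dirichlet-eigenfunction Fourier series makes $\Pi_N$ an orthogonal projection in $\mathscr{D}(\Delta^2)$ as well, so the trivial estimate $\|(I-\Pi_N)u\|_{\mathscr{D}(\Delta^2)}\leq\|u\|_{\mathscr{D}(\Delta^2)}$ applies. Because $E(\tau_j)$ is finite-dimensional, the two norms $\|\cdot\|_{X(D)}$ and $\|\cdot\|_{\mathscr{D}(\Delta^2)}$ restricted to $E(\tau_j)$ are equivalent, and the unit sphere in $(E(\tau_j),\|\cdot\|_{X(D)})$ is compact; hence the supremum is finite and independent of $N$. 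Folding this bound into the constant in Theorem~\ref{eig-convrate} and Theorem~\ref{eigfunc-convrate} produces the two inequalities in the statement.

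There is no genuine obstacle here; the care needed is essentially bookkeeping. The constant $C$ necessarily depends on the index $j$ through the factor $(1+\tau_j)$ visible in \eqref{eig-difference} and through the $\mathscr{D}(\Delta^2)$-norm of a basis of the eigenspace, but this dependence is absorbed into $C$ in accordance with the convention used throughout Section~\ref{conv-eig}. The exponents $4/d$ and $2/d$ are what $m=2$ yields; a faster rate would require more regularity on $\alpha$ so that Theorem~\ref{T-subset} could be iterated to place $\text{Range}(T)$ in $\mathscr{D}(\Delta^m)$ for some $m>2$.
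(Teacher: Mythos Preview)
Your proposal is correct and follows exactly the route the paper takes: the paper's proof consists of a single sentence invoking Theorems~\ref{eig-convrate}, \ref{eigfunc-convrate}, and \ref{T-subset}, and you have simply spelled out the details (the identification $u_j=\tau_j T u_j$ to place $E(\tau_j)$ inside $\text{Range}(T)$, and the finite-dimensionality argument to absorb the supremum into $C$) that the paper leaves implicit.
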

\begin{proof}
The result follows directly from the estimates in Theorems \ref{eig-convrate} and \ref{eigfunc-convrate} and by appealing to Theorem  \ref{T-subset}.
\end{proof}
 
%%%%%%%%%%%%%%%%%%%%%%%%%%%%%%%%%%%%%%%%%%%%%%%%%%%%%%%%%%%%%%
\section{Numerical Examples}\label{numerics}   
 In this section, we provide some numerical examples for computing the simply supported vibrating plate eigenvalues for the uint circle and unit sphere. The purpose is to validate the Dirichlet spectral-Galerkin approximation method for computing the eigenvalues. For the domains under consideration, the Dirichlet eigenpairs are known via separation of variables. In order to apply this method, one needs to know the Dirichlet eigenpairs a priori. Therefore, one needs to pre-calculate the Dirichlet eigenpairs for the domain $D$, which can be done by the BEM \cite{BEM-dirichlet} or FEM \cite{eig-FEM-book}. The numerical experiments are all done with the software \texttt{MATLAB} 2020a on a MacBook Pro with a 2.3 GHz Intel Core i5 processor with 8GB of memory.

 In the following numerical examples, we will apply the Dirichlet spectral-Galerkin method to approximate the eigenvalue $\tau$ of  \eqref{eig-varform}. To do so, we will let the approximate vibrating plate eigenfunctions $u_N$ as the span of the finitely many Dirichlet eigenfunctions denote $\phi_i$ satisfy \eqref{dirichlet-eig}. By using this approximation in \eqref{spec-varform} with $v_N = \phi_j$ we obtain the matrix eigenvalue problem given by 
 \begin{align}
\left( {\bf A}-\tau_N{\bf B}\right) \vec{u}_N =0 \quad \text{ where } \quad \vec{u}_N \neq \vec{0}. \label{g-eig}
\end{align}
Here the Galerkin matrices are given by 
$${\bf A}_{i,j}  = a(\phi_i , \phi_j) \quad \text{ and } \quad {\bf B}_{i,j} =  b(\phi_i , \phi_j) \quad \text{ for } \,\, i,j =1, \cdots , N.$$
It is clear that the corresponding eigenvalues $\tau_{j,N}$ satisfy the Galerkin eigenvalue problem \eqref{spec-varform}. Notice that by the definition of the sesquilinear forms we have that 
$$a(\phi_i , \phi_j) =  \int\limits_{D} \alpha(x) \,   \Delta \phi_i (x) \,  \Delta \overline{\phi}_j(x) \, \text{d}x  \quad \text{and} \quad b(\phi_i , \phi_j) =  \int\limits_{D} \beta(x) \,  \phi_i (x) \, \overline{\phi}_j(x) \, \text{d}x.$$
Then the matrix eigenvalue problem \eqref{g-eig} is solved by appealing to the built-in eigenvalue solver i.e. the `\texttt{eig}' command in \texttt{MATLAB}.

%%%%%%%%%%%%%%%%%%%%%%%%%%%%%%%%%%%%%%%%%%%%%%%%%%%%%%%%%%%%
\subsection{Examples for the Unit Disk}\label{eig-approx1}
Here, we will provide some numerical examples for the unit disk. To this end, we will write the integral to compute the Galerkin matrices in polar coordinates. To compute the Galerkin matrices we implement a 2d Gaussian quadrature method. We use a 12 point quadrature scheme to evaluate the integrals in each radial and angular directions. Here the approximation space is given by the span of finitely many basis functions 
$$\phi_{j}(r,\vartheta) = J_{n} \left(\sqrt{\lambda_{n,m}} \, r\right) \cos(n \vartheta)\quad \text{with index} \quad j=j(n,m) \in \N.$$
The square root of the Dirichlet eigenvalues $\sqrt{\lambda_{n,m}}$ corresponds to the $m$th positive root of the $n$th first kind Bessel function denoted $J_n$ for all $n\in \N \cup\{0\}$ and $m \in \N$. In order to compute the roots $\sqrt{\lambda_{n,m}}$ we use the `\texttt{fzero}' command in \texttt{MATLAB}. 
In the following numerical examples we take 24 basis functions where $0\leq n \leq 5$ and $1\leq m \leq 4$ which will give that the approximation space is defined as 
$$ {X}_N (D) = \text{Span} \Big\{ J_{n} \left(\sqrt{\lambda_{n,m}} \, r\right) \cos(n \vartheta)  \Big\}_{n=0\, ,\, m=1}^{n=5 \, ,\, m=4}.$$
We check the relative error compared to separation of variables for the constant coefficient case. It can be shown that the exact simply supported vibrating plate eigenvalues $\tau_j$ are given by the solutions to the transcendental equations 
\begin{align}
J_{n} \left(\sqrt[4]{\tau {\beta}/{\alpha}}\right) =0\quad \text{for any} \quad n \in \N \cup \{0\}. \label{exact-eig}
\end{align}
By Theorem \ref{constant-conv} the computational space with 24 basis functions should be accurate on the order of $10^{-2}$ so we present some numerical examples with variable coefficients. We see the monotonicity with respect to the coefficients. Where the Courant-Fischer (see for e.g. Chapter 4 of \cite{TE-book}) min-max principle implies that computed eigenvalues $\tau_{j,N}$ are decreasing with respect to $\beta$ and increasing with respect to $\alpha$.

\subsubsection{Example 1}
%%%%%%%%%%%%
\noindent{\bf Constant coefficients:} For our first example, we compute the first three eigenvalues corresponding to constant coefficients $\alpha =1/4$ and $\beta=10$ which are reported in Table \ref{eig-compare}. In order to show that the approximation works well for a limited number of basis functions we compare with the roots to \eqref{exact-eig} which are computed using the `\texttt{fzero}' command in \texttt{MATLAB}. Here we also report the relative error given by $|\tau_{j,N} - \tau_{j}|/\tau_{j}$ where $\tau_{j}$ and $\tau_{j,N}$ denote the $j$th eigenvalues for \eqref{eig-varform} and \eqref{spec-varform} respectively. \\

\begin{table}[ht!]
 % title of Table
\centering  
\begin{tabular}{ c | c | c } % centered columns 
\hline                  % inserts single horizontal line
     Approx        &  Exact    & Rel Error    \\ [0.5ex] % inserts table 
%heading
\hline                  % inserts single horizontal line
\hline                  % inserts single horizontal line
$\tau_{1,N}=0.8361309908$ & $\tau_{1}=0.8361309971$ &  $7.532\times 10^{-9}$ \\
$\tau_{2,N}=5.3890063494$ & $\tau_{2}=5.3890065484$ &  $3.691\times 10^{-8}$ \\
$\tau_{3,N}=17.390486256$ & $\tau_{3}=17.390509792$ &  $1.353\times 10^{-6}$ \\
\hline %inserts single line          
\end{tabular}
\caption{Comparison of the approximated eigenvalues vs exact eigenvalues given by separation of variables for the unit disk with $\alpha =1/4$ and $\beta=10$. }\label{eig-compare}
\end{table}

\subsubsection{Example 2}
%%%%%%%%%%%%
\noindent{\bf Continuous coefficients:} Here we present two numerical examples of the computation of the eigenvalues for smooth variable coefficients. In Table \ref{eig-circle2} we report the  first three computed eigenvalues. We take the two pairs of coefficients given by 
$$ \alpha = 1/4+ r \sin^2(\vartheta) \, \, ; \, \, \beta=10 \quad \text{and} \quad \alpha = 1/4 \, \, ; \, \,\beta=10+2\text{exp}(-r^2).$$
Even though the accuracy is not studied we see that the computed eigenvalues $\tau_{j,N}$ satisfy the monotonicity with respect to the parameters $\alpha$ and $\beta$. This can be seen by comparing to the values given in Table \ref{eig-compare}.
\\

\begin{table}[ht!]
 % title of Table
\centering  
\begin{tabular}{ c | c  } % centered columns 
\hline                  % inserts single horizontal line
     $\alpha =1/4+ r \sin^2(\vartheta)$ and $\beta=10$    &  $\alpha = 1/4$ and $\beta=10+2\text{exp}(-r^2)$   \\ [0.5ex] % inserts table 
%heading
\hline                  % inserts single horizontal line
\hline                  % inserts single horizontal line
$\tau_{1,N}=1.3319675316$ & $\tau_{1,N}=0.7187824125$ \\
$\tau_{2,N}=7.4582616121$ & $\tau_{2,N}=4.7034217796$ \\
$\tau_{3,N}=29.166695881$ & $\tau_{3,N}=15.321238476$ \\
\hline %inserts single line          
\end{tabular}
\caption{Approximation of the eigenvalues for two sets of continuous coefficients for the unit disk. }\label{eig-circle2}
\end{table}

\subsubsection{Example 3}
%%%%%%%%%%%%
\noindent{\bf Piece-wise constant coefficients:} Now we report the numerically computed eigenvalues for piece-wise constant coefficients in Table \ref{eig-circle3}. The parameters $\alpha$ and $\beta$ are taken to be radially piece-wise constant functions in the unit disk. We take the two pairs of coefficients given by 
$$ \alpha =  1/4\left(1+\mathbbm{1}_{(r>0.25)}\right) \, \, ; \, \, \beta=10 \quad \text{and} \quad \alpha = 1/4 \, \, ; \, \,\beta=10+2\mathbbm{1}_{(r>0.5)}$$
where $\mathbbm{1}_{I}$ denotes the indicator function on the interval $I$. Again, we note the computed eigenvalues $\tau_{j,N}$ satisfy the monotonicity with respect to the parameters $\alpha$ and $\beta$ by comparison with the values in Table \ref{eig-compare}.

\begin{table}[ht!]
 % title of Table
\centering  
\begin{tabular}{ c | c  } % centered columns 
\hline                  % inserts single horizontal line
     $\alpha = 1/4\left(1+\mathbbm{1}_{(r>0.25)}\right)$ and $\beta=10$    &  $\alpha = 1/4$ and $\beta=10+2\mathbbm{1}_{(r>0.5)}$   \\ [0.5ex] % inserts table 
%heading
\hline                  % inserts single horizontal line
\hline                  % inserts single horizontal line
$\tau_{1,N}=1.2757025572$ & $\tau_{1,N}=0.7814121836$ \\
$\tau_{2,N}=9.9620153799$ & $\tau_{2,N}=4.7870466455$ \\
$\tau_{3,N}=34.214855049$ & $\tau_{3,N}=14.975919099$ \\
\hline %inserts single line          
\end{tabular}
\caption{Approximation of the eigenvalues for the two sets of discontinuous coefficients for the unit disk. }\label{eig-circle3}
\end{table}

%%%%%%%%%%%%%%%%%%%%%%%%%%%%%%%%%%%%%%%%%%%%%%%%%%%%%%%%%%%%
\subsection{Examples for the Unit Square}\label{eig-approx2}
Here, we will provide some numerical examples for the unit square given by $D=(0,1)^2$. Our focus is on numerically determining the convergence rate. To this end, we will provide the approximation of the first three simply supported vibrating plate eigenvalues $\tau_{j,N}$ for multiple values of $N$. Here the value $N$ denotes the degrees of freedom(DoF) for the Galerkin matrices. In order to estimate the convergence rate we check the relative error given by 
$$R(i) = \frac{| \tau_{j,N_{i}} - \tau_{j,N_{i+1}}|}{\tau_{j,N_{i+1}}} \quad \text{for the first three eigenvalues with} \quad i = 1,2,3.$$
Therefore, by the relative error, we can conclude the convergence of the approximation. The computational space will be given by the span of finitely many basis functions which are given by 
$$\phi_{j}(x_1,x_2) = \sin(n\pi x_1)\sin(m\pi x_2) \quad \text{with index} \quad j=j(n,m) \in \N.$$
We take $1\leq n,m\leq M$ which gives that the computational space is 
$$ X_N (D)=\text{Span} \Big\{  \sin(n\pi x_1)\sin(m\pi x_2)  \Big\}_{n, m=1}^{n, m \leq M}.$$
  To compute the Galerkin matrices we will use the built-in `\texttt{integral2}' command in \texttt{MATLAB}. Here we provide examples for multiple pairs of parameters $\alpha$ and $\beta$.

\subsubsection{Example 4}
%%%%%%
\noindent{\bf Constant coefficient $\alpha$:} For this example, we compute the eigenvalues $\tau_{j,N}$ for various values of $N$ with a constant $\alpha$. This is done for the parameters 
$$\alpha=1/4 \quad \text{and} \quad \beta=(x_1^2+1)(x_2^2+10)$$
where the eigenvalues are reported in Table \ref{eig-square1}. In our calculations, we see numerically that $\tau_{2,N} \approx \tau_{3,N}$. Therefore, we will  report the eigenvalues $\tau_{j,N}$ where $j=1,2,4$. In Table \ref{error-square1} we report the relative error for the reported eigenvalues. 

\begin{table}[ht!]
 % title of Table 
\centering  
\begin{tabular}{ c | c | c | c } % centered columns 
\hline                  % inserts single horizontal line
      DoF      & $\tau_{1,N}$    &   $\tau_{2,N} $  &   $\tau_{4,N} $   \\ [0.5ex] % inserts table 
%heading
\hline                  % inserts single horizontal line
\hline                  % inserts single horizontal line
$N=10$  &$7.3573520969$ & $45.421593372$  & $118.08243134$ \\
$N=15$  &$7.3570078657$ & $45.395939541$  & $116.61165938$  \\
$N=20$  &$7.3569921869$ & $45.394566012$  & $116.56679385$ \\
$N=25$  &$7.3569922082$ & $45.394456592$  & $116.56395603$\\
\hline %inserts single line          
\end{tabular}
\caption{The first three `distinct' approximated eigenvalues for the unit square with various values of $N$ where $\alpha=1/4$ and $\beta=(x_1^2+1)(x_2^2+10)$.  }\label{eig-square1}
\end{table}

\begin{table}[ht!]
 % title of Table
\centering  
\begin{tabular}{  c | c | c | c  } % centered columns 
\hline                  % inserts single horizontal line
            & Rel. error for $\tau_{1,N}$    &   Rel. error for $\tau_{2,N}$  &   Rel. error for $\tau_{4,N}$   \\ [0.5ex] % inserts table 
%heading
\hline                  % inserts single horizontal line
\hline                  % inserts single horizontal line
$N=10$  &$0.000046789$ & $0.000565112$   & $0.012612563$  \\
$N=15$  &$0.000002131$ & $0.000030257$   & $0.000384891$  \\
$N=20$  &$0.000000003$ & $0.000002410$   & $0.000024345$ \\
\hline %inserts single line          
\end{tabular}
\caption{The relative error for the first three `distinct'  approximated eigenvalue for the unit square with various values of $N$ where  $\alpha=1/4$ and $\beta=(x_1^2+1)(x_2^2+10)$.  }\label{error-square1}
\end{table}

\subsubsection{Example 5}
%%%%%%
\noindent{\bf Constant coefficient $\beta$:} This example is focused on computing the first three eigenvalues for case when the parameter $\beta$ is constant. In Table \ref{eig-square2} we report  the corresponding eigenvalues $\tau_{j,N}$ where $j=1,2,3$  for various values of $N$ for the parameters given by 
$$\alpha=x_1 x_2+1/4 \quad \text{and} \quad \beta=10.$$
For this set of parameters, we  also report the relative errors in Table \ref{error-square2}. 
%We see that the convergence rate is lower than the previous example when $\alpha$ was constant. 

\begin{table}[ht!]
 % title of Table
\centering  
\begin{tabular}{  c | c | c | c } % centered columns 
\hline                  % inserts single horizontal line
        DoF    & $\tau_{1,N}$    &   $\tau_{2,N}$  &   $\tau_{3,N}$   \\ [0.5ex] % inserts table 
%heading
\hline                  % inserts single horizontal line
\hline                  % inserts single horizontal line
$N=10$  &$18.05863342659$ & $109.6130034603$    &  $116.67276901349$  \\ 
$N=15$  &$18.02755051044$ & $107.4962046649$    &  $113.77896218468$ \\ 
$N=20$  &$18.01765352838$ & $107.2367212907$    &  $113.67996470052$ \\ 
$N=25$  &$18.01607463791$ & $107.1599636478$    &  $113.64645491602$ \\ 
\hline %inserts single line          
\end{tabular}
\caption{The first three approximated eigenvalues for the unit square with various values of $N$ where $\alpha= x_1 x_2+1/4$ and $\beta=10$.}\label{eig-square2}
\end{table}

\begin{table}[ht!]
 % title of Table
\centering  
\begin{tabular}{  c | c | c | c } % centered columns 
\hline                  % inserts single horizontal line
    & Rel. error for $\tau_{1,N}$    &   Rel. error for $\tau_{2,N}$  &   Rel. error for $\tau_{3,N}$   \\ [0.5ex]  % inserts table %heading
\hline                  % inserts single horizontal line
\hline                  % inserts single horizontal line
$N=10$  &$0.001128353$ & $0.019691846$    &  $0.025433584$ \\
$N=15$  &$0.000421611$ & $0.002419724$    &  $0.000870843$ \\
$N=20$  &$0.000062390$ & $0.000716290$    &  $0.000294859$ \\
\hline %inserts single line          
\end{tabular}
\caption{The relative error for the first three approximated eigenvalues for the unit square with various values of $N$ where $\alpha= x_1 x_2+1/4$ and $\beta=10$.  }\label{error-square2}
\end{table}

\subsubsection{Example 6}
%%%%%%%%%%%%
\noindent{\bf Non-constant coefficients:} For our last example, we compute the eigenvalues $\tau_{j,N}$ where both the parameters $\alpha$ and $\beta$ are non-constant functions. Therefore, we will see the convergence of the approximation method where we have two variable coefficients. The eigenvalues for $j=1,2,3$ are reported in Table \ref{eig-square3} for 
$$\alpha=x_1 x_2+1/4 \quad \text{and} \quad \beta=(x_1^2+1)(x_2^2+10).$$
Just as in example 5 we estimate we report the relative errors to see the convergence in Table \ref{error-square3}.  Here we also estimate the convergence rate $p$ from the relative error where $R(i) = \mathcal{O}(N_i^{-p})$ which is also reported.

\begin{table}[ht!]
 % title of Table
\centering  
\begin{tabular}{  c | c | c | c } % centered columns 
\hline                  % inserts single horizontal line
      DoF     & $\tau_{1,N}$    &   $\tau_{2,N}$  &   $\tau_{3,N}$   \\ [0.5ex] % inserts table %heading
\hline                  % inserts single horizontal line
\hline                  % inserts single horizontal line
$N=10$  &$13.9832243446$ & $82.8159686896$    &$89.7089031694$ \\
$N=15$  &$13.9674641031$ & $81.7610212109$    &$89.2854919971$ \\
$N=20$  &$13.9615777461$ & $81.6259370343$    &$89.2499753721$ \\
$N=25$  &$13.9607067280$ & $81.5805519814$    &$89.2373897357$ \\
\hline %inserts single line          
\end{tabular}
\caption{The first three approximated eigenvalues for the unit square with various values of $N$ where $\alpha=x_1 x_2+1/4$ and $\beta=(x_1^2+1)(x_2^2+10)$.  }\label{eig-square3}
\end{table}

\begin{table}[ht!]
 % title of Table
\centering  
\begin{tabular}{  c |  l  |  l  |  l } % centered columns 
\hline                  % inserts single horizontal line
                & Rel. error for $\tau_{1,N}$    &   Rel. error for $\tau_{2,N}$  &   Rel. error for $\tau_{3,N}$   \\ [0.5ex]  % inserts table %heading
\hline                  % inserts single horizontal line
\hline                  % inserts single horizontal line
$N=10$  &$0.001128353$ & $0.012902816$    &  $0.004742216$ \\
$N=15$  &$0.000421611\,(p=2.43)$ & $0.001654917\,(p=5.07)$    &  $0.000397945\,(p=6.11)$ \\
$N=20$  &$0.000062390\,(p=6.64)$ & $0.000556321\,(p=3.79)$    &  $0.000141035\,(p=3.61)$ \\
\hline %inserts single line          
\end{tabular}
\caption{The relative error and convergence rate $p$ for the first three approximated eigenvalues for the unit square with various values of $N$ where $\alpha=x_1 x_2+1/4$ and $\beta=(x_1^2+1)(x_2^2+10)$.  }\label{error-square3}
\end{table}

For completeness, we also plot the first three eigenfunctions corresponding to example 6 in Figure \ref{eigfunc}. Recall, that the results from the previous section gives the convergence of the eigenfunctions as in Theorems \ref{eigfunc-convrate} and  \ref{constant-conv}. We also note that in our examples we see that the relative error seems to tend to zero at a rate of $\mathcal{O}(N^{-4})$ as $N \to \infty$. The analysis given in the previous section gives a convergence rate of at least second-order for the examples presented here. Also, presented in Figure \ref{eigfunc} is the convergence plot of the relative error $R(i)$ for the eigenvalues in Table \ref{error-square3}. 
\vspace{-0.2in}
\begin{figure}[H]
\centering
\subfloat[eigenfunction for $\tau_{1,N}$]{\includegraphics[width=7cm]{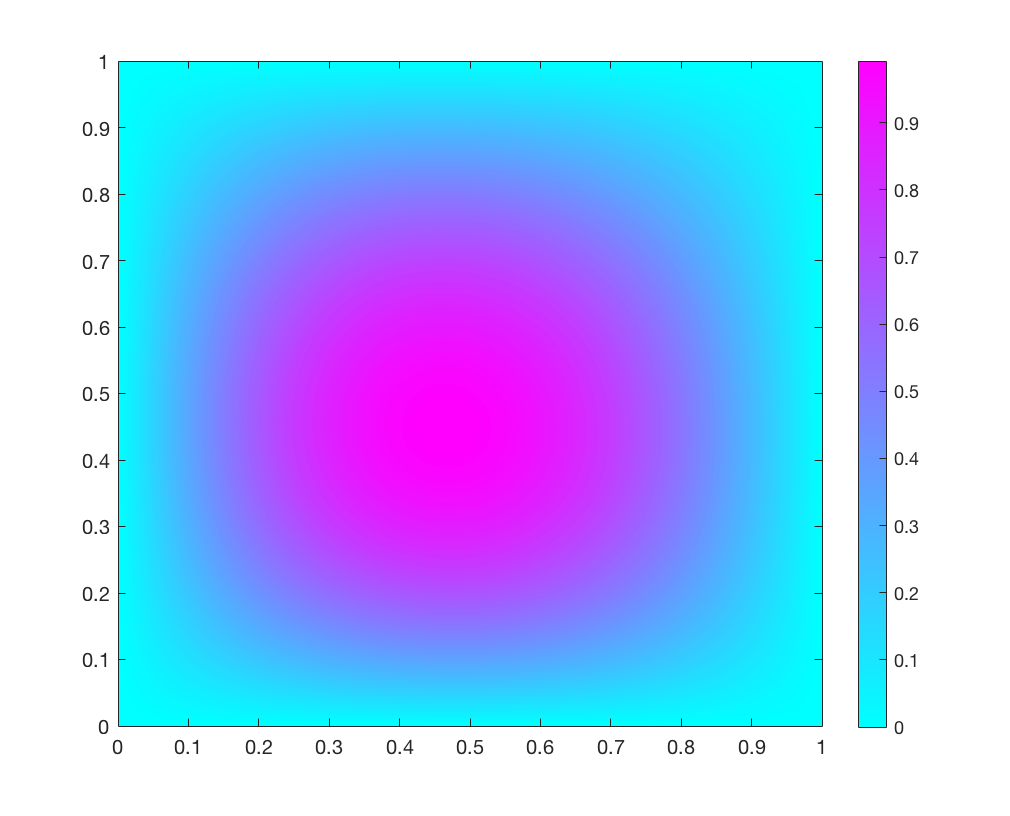}} \hspace{0.5cm}
\subfloat[eigenfunction for $\tau_{2,N}$]{\includegraphics[width=7cm]{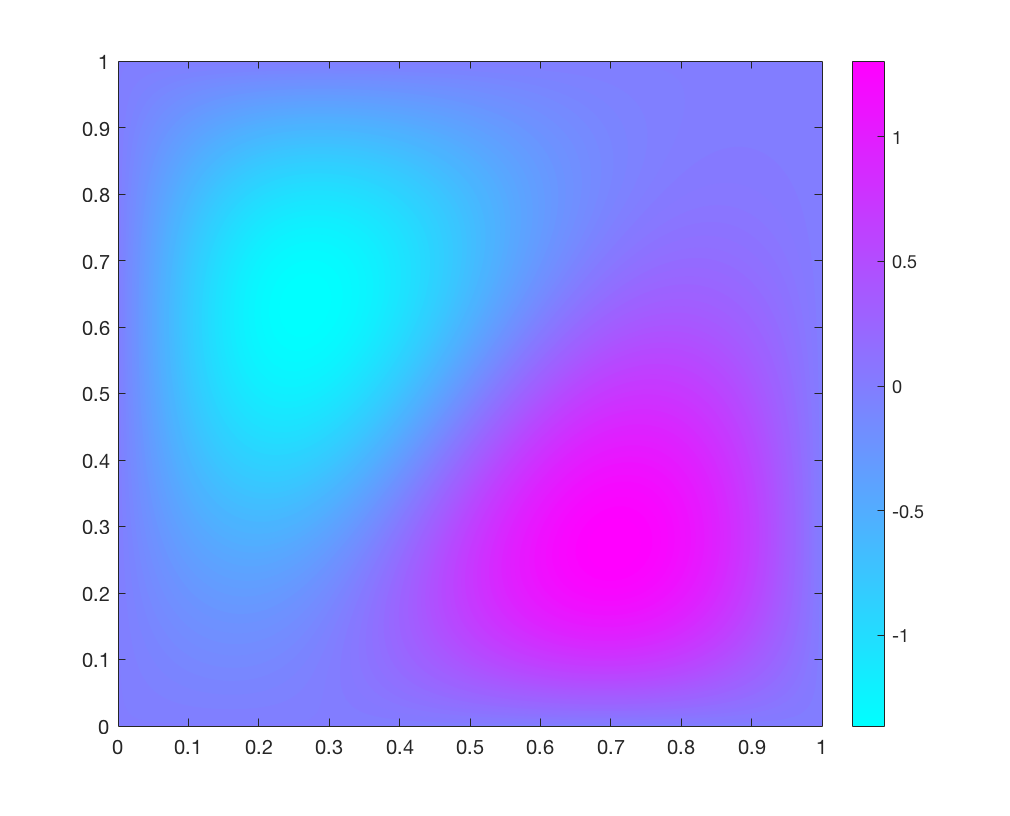}}\\
%\hspace{-3in}\subfloat[eigenfunction for $\tau_{3,N}$]{\includegraphics[width=7cm]{eigfunc3.jpg}}
\subfloat[eigenfunction for $\tau_{3,N}$]{\includegraphics[width=7cm]{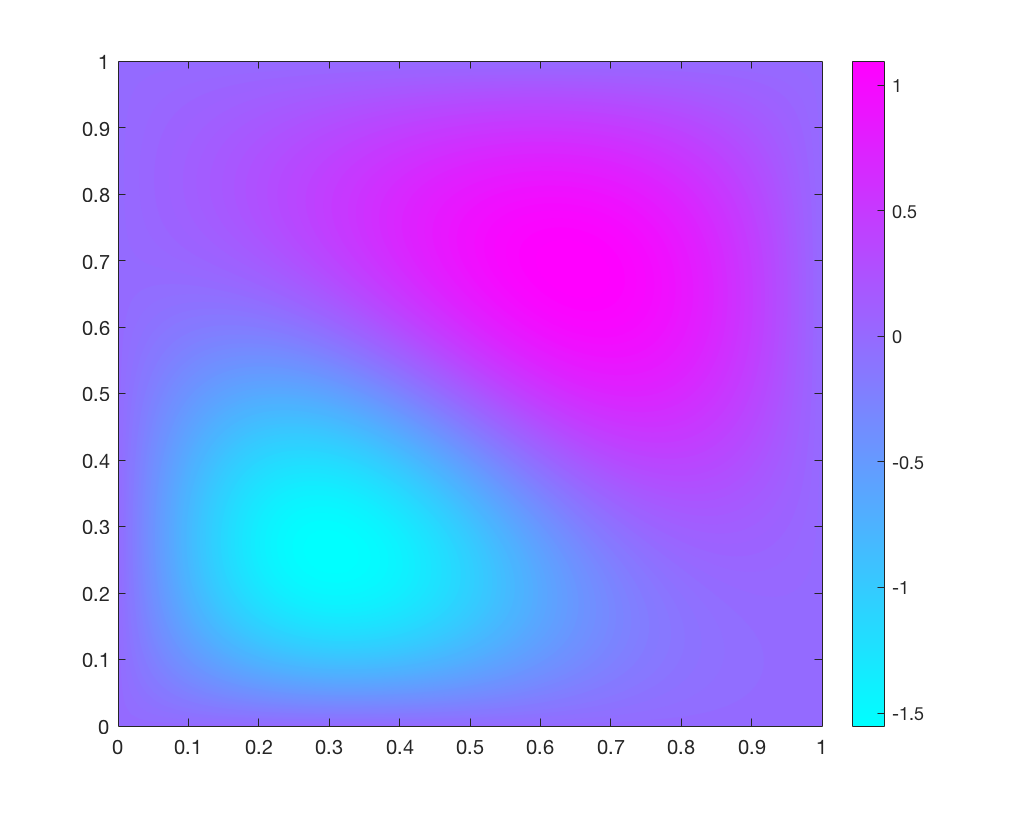}}\hspace{0.1in}
\subfloat[log-log plot of rel. error for $\tau_{j,N}$]{\includegraphics[width=7cm]{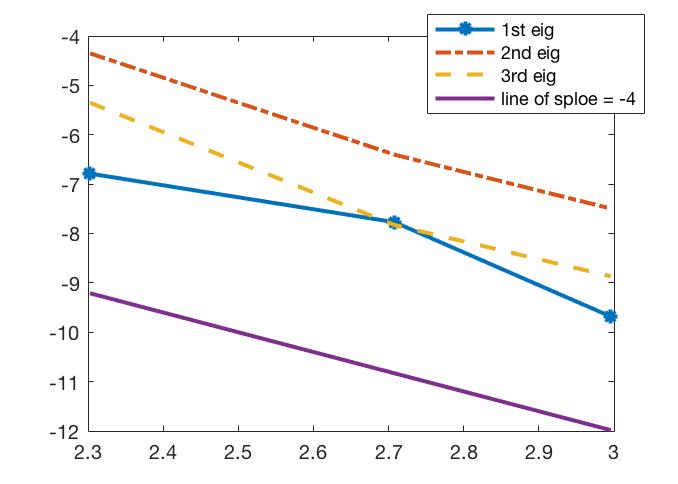}}
\caption{Plots of the first three eigenfunctions for the uint square for $N=25$ with coefficients $\alpha=x_1 x_2+1/4$ and $\beta=(x_1^2+1)(x_2^2+10)$. Also reported is the log-log plot of the relative error for $\tau_{j,N}$ compared to a line with slope $-4$.  } \label{eigfunc}
\end{figure}

%%%%%%%%%%%%%%%%%%%%%%%%%%%%%%%%%%%%%%%%%%%%%%%%%%%%%%%%%%%%%%%%%%
\section{Summary and Conclusions}\label{last}

In conclusion, we have analyzed and implemented the Dirichlet spectral-Galerkin method to compute the simply supported vibrating plate eigenvalues. The analysis uses the fact that the Dirichlet eigenfunctions for the Laplacian have sufficient approximation properties which are estimated using Wely's law. From our numerical experiments we see that the convergence rate is approximately $\mathcal{O}(N^{-4})$. More work is needed to prove the fourth-order convergence of the eigenvalues. The main assumption on the domain $D$ is that it's boundary $\partial D$ be either polygonal with no reentrant corners or class $\mathscr{C}^2$. This is mainly to ensure that the Dirichlet eigenfunctions form a basis in the solution space $X(D)$. From our examples, we see that approximation seems to compute the eigenvalues fairly accurately for a modest size matrix. As stated in the previous sections, for domains where the  Dirichlet eigenfunctions are known analytically this method is simple to implement and gives good approximations for a modest amount of basis functions. Lastly, one can also attempt to compute other simply supported eigenvalues with this method such as the bucking plate eigenvalues. See \cite{fem-4th2} for the mixed finite element method applied to this eigenvalue problem.   \\

\noindent{\bf Acknowledgments:} The author would like to thank Andreas Kleefeld for helpful feedback and suggestions which lead to a much improved manuscript. 

%%%%%%%%%%%%%%%%%%%%%%%%%%%%%%%%%%%%%%%%%%%%%%%%%%%%%%%%%%%%%%%%%

\end{document}